\numberwithin{equation}{section} %% Comment out for sequentially-numbered
\numberwithin{figure}{section} %% Comment out for sequentially-numbered
\theoremstyle{plain}
\newtheorem*{thm*}{Theorem}
\theoremstyle{plain}
\newtheorem{thm}{Theorem}[section]
\theoremstyle{definition}
\theoremstyle{plain}
\newtheorem{lem}[thm]{Lemma}
\theoremstyle{plain}
\theoremstyle{plain}
\newtheorem{cor}[thm]{Corollary}
\theoremstyle{remark}
\theoremstyle{remark}
\newtheorem*{acknowledgement*}{Acknowledgement}
\begin{document}

\title[First integrals for Finsler metrics with vanishing $\chi$-curvature]{First integrals for Finsler metrics with vanishing $\chi$-curvature}

\author[Bucataru]{Ioan Bucataru}
\address{Faculty of  Mathematics \\ Alexandru Ioan Cuza University \\ Ia\c si, 
  Romania}
\email{bucataru@uaic.ro}
\urladdr{http://orcid.org/0000-0002-8506-7567}

\author[Constantinescu]{Oana Constantinescu}
\address{Faculty of  Mathematics \\ Alexandru Ioan Cuza University \\ Ia\c si, 
  Romania}
\email{oanacon@uaic.ro}
\urladdr{http://orcid.org/0000-0003-2687-2029}

\author[Cre\c tu]{Georgeta Cre\c tu}
\address{Department of  Mathematics \\ Gheorghe Asachi Technical University \\ Ia\c si, 
  Romania}
\email{cretuggeorgeta@gmail.com}
\urladdr{http://orcid.org/0000-0003-4197-0268}

\date{\today}

\begin{abstract}
We prove that in a Finsler manifold with vanishing $\chi$-curvature (in particular with constant flag curvature) some non-Riemannian geometric structures are geodesically invariant and hence they induce a set of non-Riemannian first integrals. Two alternative expressions of these first integrals can be obtained either in terms of the mean Berwald curvature, or as functions of the mean Cartan torsion and the mean Landsberg curvature. 
\end{abstract}

\subjclass[2000]{53C60, 53B40, 53D25, 70H06}

\keywords{Cartan torsion, Berwald curvature, $\chi$-curvature, first integral}

\maketitle

\section{Introduction.}

An important geometric structure that encodes informations about the non-Riemannian, Finslerian setting is given by the (mean) Cartan torsion. This structure has been used intensively to prove rigidity results in Finsler geometry, first by Akbar-Zadeh in \cite{AZ88}, and very recently by \'Alvarez-Paiva in \cite{AP21}. 

Another important non-Riemannian quantity in Finsler geometry is the so-called $\chi$-curvature, which has been introduced by Shen in \cite{Shen13}. Sprays with vanishing $\chi$-curvature were studied in a recent paper \cite{Shen21} by Shen. In Lemmas \ref{lemcfi} and \ref{lemchi} we show that Finsler metrics with vanishing $\chi$-curvature have some geodesically invariant structures: the mean Berwald curvature and a $1$-form $\alpha$ that depends on the mean Cartan torsion and the mean Landsberg curvature. We use these geodesically invariant structures to construct geodesically invariant functions and hence first integrals. One of these first integrals has been obtained previously in \cite[(1.1)]{BCC21} and it reduces to the first integral obtained by Foulon and Ruggiero for $k$-basic Finsler surfaces in \cite{FR16}. The fact that the $\chi$-curvature can be expressed in terms of the mean Cartan torsion is a key aspect for obtaining rigidity results in Finsler geometry. 
 
In this work we provide two alternative expressions for a set of $(n-1)$ non-Riemannian first integrals for Finsler metrics of vanishing $\chi$-curvature, in particular for Finsler metrics of  constant flag curvature. 

We fix our framework by considering a smooth $n$-dimensional manifold $M$, with $n\geq 2$, and $TM$ its
tangent bundle. A \emph{Finsler structure} is given by a function $F:TM\to [0, +\infty)$ that is continuous, positively $1$-homogeneous in the fiber coordinates, smooth on $T_0M=TM\setminus \{0\}$, and whose metric tensor: 
\begin{eqnarray}
g_{ij}(x,y)=\frac{1}{2}\frac{\partial^2F^2}{\partial y^i \partial y^j}(x,y) \label{gij}
\end{eqnarray}
is non-degenerate on $T_0M$. Here $(x,y)\in TM$, with $x\in M$ and
$y\in T_xM$.

On a Finsler manifold $(M,F)$, we consider $B^k_{ijl}$ the Berwald curvature, its mean Berwald curvature 
$ E_{ij}=\frac{1}{2}B^k_{ijk}$, given by \eqref{be}, and the $0$-homogeneous, $(1,1)$-type tensor: 
\begin{eqnarray}
\mathcal{E}^i_j=2Fg^{ik}E_{kj}.  \label{eij}
\end{eqnarray}
On a Finsler manifold $(M,F)$, the Finsler structure $F$ and the contravariant metric tensor $g^{ik}$ are covariant constant along the geodesic flow. Therefore, the tensor \eqref{eij} and all its geometric invariants will be constant along the geodesic flow if the mean Berwald curvature will also have this property. 

For the $(1,1)$-type tensor \eqref{eij}, we consider its characteristic polynomial
\begin{eqnarray}
P(\Lambda)=\det\left(\mathcal{E}^i_j+\Lambda \delta^i_j\right) = \sum_{a=0}^{n} g_a\Lambda^{n-a}=\Lambda^n+ g_1\Lambda^{n-1}+\cdots + g_{n-1}\Lambda. \label{pl}
\end{eqnarray}
Since $E_{ij}y^j=0$, see \cite[(6.11)]{Shen01}, it follows $\mathcal{E}^i_jy^j=0$ and hence $\det\left(\mathcal{E}^i_j\right)=0$. Therefore, $P(\Lambda)$ has no free term, $g_n=0$, the leading terms is $g_0=1$ and the other coefficients $g_a$, $a\in \{1,2...,n-1\}$, are $0$-homogeneous functions. 

We formulate now the main theorem of our work.
\begin{thm} \label{mainthm}
For a Finsler metric of vanishing $\chi$-curvature, the following $n-1$, $0$-homogeneous functions: 
\begin{eqnarray}
f_{a}=\operatorname{Tr}\left(\mathcal{E}^a\right), \quad a\in \{1,2,...,n-1\}, \label{fa}
\end{eqnarray}
and the coefficients functions $g_a$ of the characteristic polynomial \eqref{pl} are first integrals for the geodesic spray $G$, which means that $G(f_a)=0$ and $G(g_a)=0$.
\end{thm}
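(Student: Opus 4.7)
The plan is to reduce Theorem \ref{mainthm} to the single assertion that the $(1,1)$-type tensor $\mathcal{E}^i_j$ defined in \eqref{eij} is covariant constant along the geodesic spray $G$. Once this is in hand, both families of functions $f_a$ and $g_a$ follow by purely algebraic considerations, exactly as the authors hint in the paragraph preceding \eqref{pl}.

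To establish the geodesic invariance of $\mathcal{E}^i_j$, I would combine two ingredients. The first is the standard fact, already recalled in the paper, that $F$ and $g^{ik}$ are covariant constant along the geodesic flow. The second is Lemma \ref{lemcfi}, which provides the geodesic invariance of the mean Berwald curvature $E_{ij}$ under the hypothesis $\chi = 0$. Applying the Leibniz rule for the dynamical covariant derivative along $G$ to the product $2Fg^{ik}E_{kj}$ then yields $\nabla_G \mathcal{E}^i_j = 0$.

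The passage from tensor invariance to first integrals is purely formal. Since $\nabla_G$ commutes with index contraction and with tensor products, and since the dynamical covariant derivative of a scalar reduces to $G$-derivation, applying it to $\operatorname{Tr}(\mathcal{E}^a)=\mathcal{E}^{i_1}_{i_2}\mathcal{E}^{i_2}_{i_3}\cdots\mathcal{E}^{i_a}_{i_1}$ gives $G(f_a)=0$ for every $a\in\{1,\dots,n-1\}$. For the coefficients $g_a$ of the characteristic polynomial, I would invoke Newton's identities, which express each elementary symmetric polynomial in the eigenvalues of $\mathcal{E}^i_j$ as a polynomial in the power sums $f_1,\dots,f_a$. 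Since $G$ is a derivation on scalar functions and each $f_a$ with $a\le n-1$ is a first integral, each $g_a$ with $a\le n-1$ is a first integral as well; the remaining coefficient $g_n$ is already known to vanish identically, so there is nothing to check.

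The main obstacle does not lie in Theorem \ref{mainthm} itself but is entirely confined to Lemma \ref{lemcfi}: the genuinely Finsler-geometric content is the implication $\chi=0\ \Rightarrow\ \nabla_G E_{ij}=0$, which the theorem uses as a black box. Granting that lemma, the remainder of the proof consists only of the Leibniz rule for $\nabla_G$ and Newton's recursion.
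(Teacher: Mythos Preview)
Your argument for the $f_a$ is exactly the paper's first proof: invoke Lemma~\ref{lemcfi} (with $\omega=0$) to obtain $\nabla E_{ij}=0$, use $\nabla F=0$ and $\nabla g^{ij}=0$ together with the Leibniz rule to get $\nabla\mathcal{E}=0$, and then commute $\nabla$ with tensor powers and the semi-basic trace.

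For the $g_a$ there is a genuine difference. You deduce $G(g_a)=0$ from $G(f_a)=0$ via Newton's identities, which is correct and is precisely the relation the paper records in \eqref{newton}. The paper, however, gives an independent \emph{second} proof for the $g_a$: it introduces the $1$-form $\alpha$ of \eqref{alpha}, shows in Lemma~\ref{lemchi} that $\mathcal{L}_G\alpha=0$ is equivalent to $\chi=0$, and then builds geodesically invariant $(2n-1)$-forms $\Omega_a$ on $SM$ from $d_JF$, $dd_JF$ and $d\alpha$. The coefficients $g_a$ arise as the proportionality factors $\Omega_a=g_{n-a}\Omega_n$, and a direct computation with volume forms identifies them with the coefficients of the characteristic polynomial \eqref{pl}. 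Your route is shorter and purely algebraic once Lemma~\ref{lemcfi} is granted; the paper's second proof costs more but yields an additional characterisation of vanishing $\chi$-curvature (Lemma~\ref{lemchi}) and an intrinsic description of the $g_a$ as ratios of invariant top-degree forms, linking the result to the techniques of \cite{Bucataru22} and \cite{FR16}.
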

In formula \eqref{fa}, $\operatorname{Tr}$ is the semi-basic trace, \cite{Youssef94}, for the $a$-th power of the tensor \eqref{eij}. 
The two sets of first integrals $\{f_1,...,f_{n-1}\}$ and $\{g_1,...,g_{n-1}\}$ are related by Newton's identities \eqref{newton}.

An important class of Finsler metrics having the first integrals \eqref{fa} is given by Finsler metrics of constant flag curvature, see Corollary \ref{const}.
 
Using the expression \eqref{eCL} of the mean Berwald curvature in terms of the mean Cartan torsion and the mean Landsberg curvature, \cite[(6.38)]{Shen01}, we can reformulate the expression of the tensor \eqref{eij} and hence of the first integrals \eqref{fa} in terms of the mean Cartan torsion and the mean Landsberg curvature, see \eqref{eCL} and \eqref{f1}. In the $2$-dimensional case, the function $f_1$ becomes a function of the Cartan scalar and the Landsberg scalar, it was known from Berwald, \cite[(8.7)]{Berwald41}, and it is the first integral obtained by Foulon and Ruggiero in \cite[Theorem B]{FR16}.

We provide two proofs for the Theorem \ref{mainthm}.
For the first proof, where we show that the functions $f_a$ are first integrals, the key aspect that we will use relies on the fact that for a Finsler metric of vanishing $\chi$-curvature, the mean Berwald curvature is covariant constant along the geodesic flow (its dynamical covariant derivative vanishes). Therefore, the tensor \eqref{eij} and all its powers are covariant constant and consequently the functions \eqref{fa} are first integrals.

We will prove a more general result in Lemma \ref{lemcfi}, by providing necessary and sufficient conditions for $\nabla E_{ij}=0$. One of these conditions can be expressed in terms of the $\chi$-curvature and was obtained by Li and Shen in \cite[Theorem 1.2]{LS15}.

For the second proof of the Theorem \ref{mainthm}, where we show that the functions $g_a$ are first integrals, we will use a $1$-form $\alpha$, \eqref{alpha}, which is geodesically invariant (it is a dynamical symmetry) if and only if the Finsler metric has vanishing $\chi$-curvature, see Lemma \ref{lemchi}. The $1$-form $\alpha$ depends on the mean Cartan torsion and the mean Landsberg curvature and can be used to provide rigidity results in Finsler geometry. For a $k$-basic Finsler surfaces, this $1$-form has been introduced by Foulon and Ruggiero and it is invariant by the geodesic flow, \cite[Lemma 2.2]{FR16}. Using similar techniques developed in \cite{Bucataru22}, we will use the $0$-homogeneous $1$-form $\alpha$ to construct a set of $(2n-1)$-forms $\Omega_a$, $a\in \{1,...,n\}$, on the projective sphere bundle $SM$. These $(2n-1)$-forms are geodesically invariant and hence their proportionality factors $g_a$, which are also the coefficients of the characteristic polynomial \eqref{pl}, are first integrals if the the $\chi$-curvature vanishes. 

The second proof of Theorem \ref{mainthm} has been inspired from the techniques that were used in \cite{Bucataru22} to obtain a set of first integrals for geodesically equivalent Finsler metrics. For projectively equivalent Riemannian or Finslerian  metrics there are various methods that can be used to obtain first integrals, see \cite{MT98, Sarlet07, T99}.

\section{Finsler metrics and induced geometric structures.}

For a Finsler manifold $(M,F)$, the metric tensor \eqref{gij} and its regularity condition can be expressed in terms of the angular metric $h_{ij}$:
\begin{eqnarray*}
g_{ij}=h_{ij}+\frac{\partial F}{\partial y^i} \frac{\partial F}{\partial
  y^j}=h_{ij}+F_{y^i}F_{y^j}, \quad h_{ij}=F\frac{\partial^2
  F}{\partial y^i\partial y^j}=FF_{y^iy^j}.
\end{eqnarray*} 
The metric tensor $g_{ij}$ is non-degenerate if and only if the angular metric $h_{ij}$ has rank $n-1$.

For a Finsler manifold $(M, F)$, its geometric setting can be derived from its geodesic spray $G\in  \mathfrak{X}(T_0M)$. The spray $G$ is uniquely determined by the Euler-Lagrange equation:
\begin{eqnarray}
\mathcal{L}_Gd_JF^2-dF^2=0. \label{ELF2}
\end{eqnarray}
Here $J=\frac{\partial}{\partial y^i} \otimes dx^i$ is the tangent structure (vertical endomorphism), $d_J$ is the induced derivation of degree $1$ and $\mathcal{L}_G$ is the Lie derivative with respect to the geodesic spray $G$. Throughout this work we will use the Fr\"olicher-Nijenhuis theory of derivations to describe the geometric setting on a Finsler manifold \cite{BD09, Grifone72, GM00, SLK14, Youssef94}.

The geodesic spray $G$ is positively $2$-homogeneous, which means that it satisfies $[\mathcal{C}, G]=S$, for $\mathcal{C}=y^i\frac{\partial}{\partial y^i}$ the Liouvile (dilation) vector field. Locally, the spray $G$ can be expressed as:
\begin{eqnarray*}
G=y^i\frac{\partial}{\partial x^i} - 2G^i(x,y)\frac{\partial}{\partial y^i}, 
\end{eqnarray*} 
where the $2$-homogeneous functions $G^i$ can be deduced from the Euler-Lagrange equations \eqref{ELF2}, and they are given by:
\begin{eqnarray}
G^i(x,y)=\frac{1}{4}g^{ij}(x,y)\left(\frac{\partial^2 F^2}{\partial y^j\partial x^k}(x,y) y^k - \frac{\partial F^2}{\partial x^j}(x,y)\right). \label{gi}
\end{eqnarray} 
The spray $G$ induces a nonlinear connection (horizontal distribution) on $T_0M$, with the horizontal and vertical projectors given by \cite{Grifone72}:
\begin{eqnarray*}
h=\frac{1}{2}\left(\operatorname{Id}-[G,J]\right), \quad v=\frac{1}{2}\left(\operatorname{Id}+[G,J]\right).
\end{eqnarray*}
Locally, the two projectors can be expressed as follows:
\begin{eqnarray*}
h=\frac{\delta}{\delta x^i}\otimes dx^i, \  v=\frac{\partial}{\partial y^i}\otimes \delta y^i, \textrm{ \ where \ } \frac{\delta}{\delta x^i}=\frac{\partial}{\partial x^i} - N^j_i\frac{\partial}{\partial y^j}, \ \delta y^i = dy^i + N^i_j dx^j, \ N^j_i = \frac{\partial G^j}{\partial y^i}.
\end{eqnarray*}
The local frame $\{\frac{\delta}{\delta x^i}, \frac{\partial}{\partial y^i}\}$ is adapted to the horizontal and vertical distributions. The geodesic spray $G$ of a Finsler metric $F$ is a horizontal vector field, which means that $hG=G=y^i\frac{\delta}{\delta x^i}$.

From the Euler-Lagrange equations \eqref{ELF2} we obtain that the Finsler metric $F$ is geodesically invariant, $G(F)=0$, and $d_hF=0$, which means: 
\begin{eqnarray*}
dF=d_vF=\frac{\partial F}{\partial y^i}\delta y^i.
\end{eqnarray*} 

For a Finsler manifold $(M, F)$, we consider the projective sphere bundle $SM=T_0M/\mathbb{R}_+$ that can be identified with the indicatrix bundle $IM=F^{-1}\{1\}$. The Hilbert $1$ and $2$-forms: 
\begin{eqnarray*}
d_JF=\frac{\partial F}{\partial y^i}dx^i, \quad
  dd_JF= \frac{1}{F}h_{ij} \delta y^i\wedge dx^j 
\end{eqnarray*}
can be restricted to the sphere bundle $SM$, $d_JF\in \Lambda^1(SM)$, $dd_JF\in \Lambda^2(SM)$, since they are $0$-homogeneous, $i_{\mathcal{C}}d_JF=0$ and $i_{ \mathcal{C}}dd_JF=0$. Moreover, 
\begin{eqnarray*}
\operatorname{rank}(dd_JF)=2 \operatorname{rank}(h_{ij})=2(n-1).
\end{eqnarray*}

The regularity condition of a Finsler metric $F$ ensures that its Hilbert $1$-form is a contact structure on $SM$, which means that $d_JF\wedge \left(dd_JF\right)^{(n-1)}\neq 0$. Therefore, a Finsler metric induces a canonical volume form on $SM$:
\begin{eqnarray}
\Omega_F=\frac{(-1)^{(n-1)(n-2)/2}}{(n-1)!}d_JF\wedge
  \left(dd_J F\right)^{(n-1)} \in \Lambda^{2n-1}(SM). \label{on} 
\end{eqnarray}

The regularity condition of a Finsler metric $F$ ensures also that the following $2$-form is a symplectic form on $T_0M$:
\begin{eqnarray}
\frac{1}{2}dd_JF^2=g_{ij}\delta y^i\wedge dx^j = Fdd_JF+dF\wedge d_JF = Fdd_JF+d_vF\wedge d_JF. \label{ddjf2}
\end{eqnarray}
The symplectic form \eqref{ddjf2} induces a canonical volume form on $T_0M$:
\begin{eqnarray}
\Omega'_F=\frac{(-1)^{n(n+1)/2}}{n!}\left(\frac{1}{2}dd_JF^2\right)^{(n)}=\det g \ dx\wedge dy  \in \Lambda^{2n}(T_0M). \label{o'n}
\end{eqnarray}
Using formulae \eqref{on}, \eqref{ddjf2} and \eqref{o'n}, we have that  the  two volume forms $\Omega_F \in \Lambda^{2n-1}(SM)$ and $\Omega'_F \in \Lambda^{2n}(T_0M)$ are related by:
\begin{eqnarray*}
\Omega'_F=(-1)^{n-1} F^{n-1} dF\wedge \Omega_F = (-1)^{n-1} F^{n-1} d_vF\wedge \Omega_F, \quad \Omega_F = \frac{(-1)^{n-1}}{F^{n}}i_{\mathcal{C}}\Omega'_F. 
\end{eqnarray*}

Among the many curvature tensors that exist in Finsler geometry, we recall first the Jacobi endomorphism and the curvature of the nonlinear connection.

The Jacobi endomorphism is the $(1,1)$-type tensor field:
\begin{eqnarray*}
\Phi=v\circ [G, h]= R^i_j\frac{\partial}{\partial y^i}\otimes dx^j, \quad R^i_j=2\frac{\partial G^i}{\partial x^j} - G\left(N^i_j\right) - N^i_kN^k_j.
\end{eqnarray*} 

The curvature of the nonlinear connection can be defined as the Nijenhuis tensor of the horizontal projector $h$, which gives the obstruction to the integrability of the horizontal distribution: 
\begin{eqnarray*}
R=\frac{1}{2}[h,h]=\frac{1}{2}R^i_{jk}\frac{\partial}{\partial y^i}\otimes dx^k\wedge dx^j, \quad R^i_{jk}= \frac{\delta N^i_j}{\delta x^k} - \frac{\delta N^i_k}{\delta x^j}.
\end{eqnarray*}

A Finsler metric has scalar flag curvature if there exists a function $\kappa\in C^{\infty}(T_0M)$ such that its Jacobi endomorphism has the form:
\begin{eqnarray*}
\Phi &  = & \kappa\left( F^2 J - Fd_JF\otimes \mathcal{C} \right) \Longleftrightarrow R^i_j=\kappa \left(\delta^i_j g_{sl} - \delta^i_s g_{jl}\right) y^sy^l.
\end{eqnarray*}
When the $0$-homogeneous function $\kappa$ is constant, we say that the Finsler metric has constant flag curvature. A Schur's Lemma for Finsler metrics is true: in dimension $n>2$, if the scalar flag curvature $\kappa$ does not depend on the fibre coordinates $y$ then $\kappa$ is constant and the Finsler metric has constant flag curvature, \cite[Theorem 3.1]{BC20}, \cite[Theorem 9.4.1]{SLK14}.   

While the Jacobi endomorphism and the curvature of the nonlinear connection have Riemannian correspondents (depend quadratic and respectively linear on the Riemannian curvature tensor), the Berwald curvature $B^i_{jkl}$ and the mean Berwald curvature $E_{ij}$ are purely non-Riemannian tensors, and they are given by:
\begin{eqnarray}
B^i_{jkl}=\frac{\partial^3 G^i}{\partial y^j \partial y^k \partial y^l}, \quad E_{ij}=\frac{1}{2}B^k_{ijk}=\frac{1}{2} \frac{\partial^3 G^k}{\partial y^i \partial y^j \partial y^k}. \label{be}
\end{eqnarray}
The mean Berwald curvature can be expressed in terms of the $S$-function, introduced by Shen, \cite[\S 5.2]{Shen01}. For a fixed vertically invariant volume form $\sigma(x)dx\wedge dy$ on TM, \cite[p.490]{SLK14}, we consider the $S$-\emph{function} and the \emph{distortion} $\tau$: 
\begin{eqnarray}
S=G(\tau), \quad \tau=\frac{1}{2}\ln \frac{\det g}{\sigma}.  \label{sgt} 
\end{eqnarray} 
The mean Berwald curvature can be expressed as follows, \cite[(6.13)]{Shen01}:
\begin{eqnarray}
E_{ij}=\frac{1}{2}\frac{\partial^2 S}{\partial y^i \partial y^j}, \ \textrm{hence \ } 2E_{ij}\delta y^i \wedge dx^j = d_vd_JS. \label{es}
\end{eqnarray}

Another important non-Riemannian quantity, the $\chi$-curvature, was introduced by Shen, \cite[(1.10)]{Shen13}. This can be defined as:
\begin{eqnarray}
\chi=\frac{1}{2}\delta_GS, \quad \textrm{where \ } \delta_GS=\mathcal{L}_Gd_JS - dS = \left\{G\left(\frac{\partial S}{\partial y^i}\right) - \frac{\partial S}{\partial x^i}\right\} dx^i, \label{chi}
\end{eqnarray}  
is the Euler-Lagrange semi-basic $1$-form associated to the spray $G$ and the $S$-function. 

The dynamical covariant derivative $\nabla$ is a derivation of degree zero along the geodesic spray. It can be defined through its action on functions and vector fields and then extended to arbitrary tensor fields and forms, \cite{BD09}:
\begin{eqnarray*}
\nabla f=G(f), \forall f \in  C^{\infty}(T_0M), \quad \nabla X=h[G, hX] + v[G, vX], \forall X \in \mathfrak{X}(T_0M).
\end{eqnarray*} 
For a Finsler structure $F$, the dynamical covariant derivative of its metric tensor vanishes, $\nabla g_{ij}=0$, \cite[(44)]{BD09} and the same happens to the Finsler function $\nabla F = G(F)=0$. Therefore, the tensor \eqref{eij} and all its geometric invariants are covariant constant along the geodesic flow if $\nabla E_{ij}=0$.

\section{Proofs of Theorem \ref{mainthm}.}

The most important aspects that we will use in the proof of Theorem \ref{mainthm} are based on the geodesic invariance of some geometric structures on Finsler manifolds with vanishing $\chi$-curvature. For the first proof we will use the fact that the mean Berwald curvature is covariant constant, $\nabla E_{ij}=0$, and therefore the tensor \eqref{eij} is covariant constant along the geodesic flow if the $\chi$-curvature vanishes. For the second proof we show in Lemma \ref{lemchi} that a $1$-form \eqref{alpha} is geodesically invariant if and only if the $\chi$-curvature vanishes. We use the form $\alpha$ to construct a set of $(2n-1)$-forms on the projective sphere bundle $SM$ that are geodesically invariant, and whose proportionality factors are first integrals.

\subsection{First proof of Theorem \ref{mainthm}} 

For the first proof, we provide in Lemma \ref{lemcfi} necessary and sufficient conditions for the mean Berwald curvature, and hence for the tensor \eqref{eij}, to be covariant constant along the geodesic flow. These conditions will allow to obtain various characterisations for a class of Finsler metrics (that includes Finsler metrics of constant flag curvature) which have the functions \eqref{fa} as a set of first integrals. 
 
\begin{lem} \label{lemcfi}
On a Finsler manifold $(M, F)$, the following conditions are equivalent:
\begin{itemize}
\item[i)] $\nabla E_{ij}=0$;
\item[ii)] $d_hd_JS$ is a basic $2$-form;
\item[iii)] there exists a basic $2$-form $\omega \in \Lambda^2(M)$ such that $2\chi = i_G\omega$.
\end{itemize}
\end{lem}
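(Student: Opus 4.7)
The approach is to translate the three conditions into algebraic constraints on the partial $y$-derivatives of $\chi$, and then deduce the equivalences by tensor-symmetry bookkeeping.

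The first step is to derive two coordinate identities. Differentiating the formula $2\chi_i=G(\partial S/\partial y^i)-\partial S/\partial x^i$ from \eqref{chi} with respect to $y^j$ and using $2E_{ij}=\partial^2S/\partial y^i\partial y^j$ from \eqref{es} produces
\begin{equation*}
2\frac{\partial\chi_i}{\partial y^j}=\left(\frac{\partial^2S}{\partial y^i\partial x^j}-\frac{\partial^2S}{\partial y^j\partial x^i}\right)+2G(E_{ij})-4N^k_jE_{ik}.
\end{equation*}
Its symmetric and antisymmetric parts in $(i,j)$ yield, respectively,
\begin{equation*}
\nabla E_{ij}=\frac{1}{2}\left(\frac{\partial\chi_i}{\partial y^j}+\frac{\partial\chi_j}{\partial y^i}\right),\qquad d_hd_JS=\frac{1}{2}\left(\frac{\partial\chi_i}{\partial y^j}-\frac{\partial\chi_j}{\partial y^i}\right)dx^j\wedge dx^i.
\end{equation*}
Thus (i) is equivalent to antisymmetry of $\partial\chi_i/\partial y^j$ in $(i,j)$, and (ii) is equivalent to $y$-independence of $\partial\chi_i/\partial y^j-\partial\chi_j/\partial y^i$, since a purely horizontal $2$-form is basic precisely when its coefficients depend only on $x$.

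The second step is to record the homogeneity consequences. A short calculation using $y^jS_{y^j}=S$ and $y^jS_{y^jy^k}=0$ shows $y^j\chi_j=0$. Combined with $1$-homogeneity of $\chi_i$, differentiating this twice yields the two contraction identities
\begin{equation*}
y^kT_{ijk}=0,\qquad y^iT_{ijk}=-\left(\frac{\partial\chi_j}{\partial y^k}+\frac{\partial\chi_k}{\partial y^j}\right)=-2\nabla E_{jk},
\end{equation*}
where $T_{ijk}:=\partial^2\chi_i/\partial y^j\partial y^k$ is automatically symmetric in $(j,k)$.

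The equivalences are now algebraic. For (i)$\Rightarrow$(iii): condition (i) forces $T$ antisymmetric in $(i,j)$, and combined with symmetry in $(j,k)$ the standard cyclic argument
\begin{equation*}
T_{ijk}=-T_{jik}=-T_{jki}=T_{kji}=T_{kij}=-T_{ikj}=-T_{ijk}
\end{equation*}
yields $T=0$; then $\partial\chi_i/\partial y^j=\omega_{ij}(x)$ is antisymmetric and $1$-homogeneity gives $\chi_i=\omega_{ij}(x)y^j$, so $\omega:=\omega_{ij}(x)\,dx^i\wedge dx^j$ satisfies (iii). The implications (iii)$\Rightarrow$(i) and (iii)$\Rightarrow$(ii) are immediate once $T=0$. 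The main obstacle is the remaining direction (ii)$\Rightarrow$(i): condition (ii) makes $T$ symmetric in $(i,j)$ and hence fully symmetric, so relabeling combined with the first contraction identity yields $y^iT_{ijk}=y^aT_{jka}=0$, and the second contraction identity then forces $\partial\chi_j/\partial y^k+\partial\chi_k/\partial y^j=0$, which is condition (i).
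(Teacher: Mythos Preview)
Your argument is correct and takes a genuinely different route from the paper. The paper proves the cycle i)$\Rightarrow$ii)$\Rightarrow$iii)$\Rightarrow$i) entirely in the Fr\"olicher--Nijenhuis formalism: it applies the derivations $d_J$, $d_v$, $d_h$, $\nabla$, $i_G$ and $\mathcal{L}_G$ to the form $d_vd_JS$ and to the Euler--Lagrange $1$-form $\delta_GS$, invoking at each step the commutation rules from \cite{BC15,GM00,BD09}, and at the end separates the semi-basic from the non-semi-basic part of an identity to extract $\nabla d_vd_JS=0$. Your proof instead works in coordinates and reduces everything to two pointwise identities,
\[
\nabla E_{ij}=\tfrac12\bigl(\partial_{y^j}\chi_i+\partial_{y^i}\chi_j\bigr),\qquad
d_hd_JS=\tfrac12\bigl(\partial_{y^j}\chi_i-\partial_{y^i}\chi_j\bigr)\,dx^j\wedge dx^i,
\]
after which the equivalences become pure tensor algebra on $T_{ijk}=\partial^2_{y^jy^k}\chi_i$ via the contractions $y^kT_{ijk}=0$ and $y^iT_{ijk}=-2\nabla E_{jk}$. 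The advantage of the paper's method is that it is coordinate-free and shows how the result sits inside the general calculus of derivations used throughout the paper; the advantage of yours is that it is self-contained (no external commutation identities are needed) and the displayed pair of formulas is illuminating in its own right, making transparent that i) and ii) are literally the symmetric and antisymmetric parts of a single object $\partial_{y^j}\chi_i$. One cosmetic point: in your step i)$\Rightarrow$iii) the choice $\omega=\omega_{ij}\,dx^i\wedge dx^j$ gives $i_G\omega=-2\chi$ rather than $+2\chi$; replacing $\omega$ by $-\omega$ fixes the sign and does not affect the logic.
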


\begin{proof}
We prove i) $\Longrightarrow$ ii). Using the expression \eqref{es} of the mean Berwald curvature in terms of the $S$-function, the assumption  $\nabla E_{ij}=0$ reads  $\nabla d_vd_JS=0$. To this equation we apply derivation $d_J$ and use the commutation rule \cite[(2.11)]{BC15}, which gives:
\begin{eqnarray*}
0=d_J\nabla d_vd_JS=\nabla d_Jd_vd_JS + d_hd_vd_JS + 2i_Rd_vd_JS = -d_vd_hd_JS.
\end{eqnarray*}
Here we made use of the following $d_Jd_v=-d_vd_J$ and $d^2_J=0$ which gives $d_Jd_vd_JS=0$. We also used that $2R=[h,h]= [h, \operatorname{Id}-v]=-[h,v]$ and hence $d_hd_vd_JS+d_vd_hd_JS=-2d_Rd_JS=-2i_Rd_vd_JS$. The fact that $d_vd_hd_JS=0$ implies that $d_hd_JS$ is a basic $2$-form ($d_hd_JS$ is already a semi-basic $2$-form).

We prove ii)) $\Longrightarrow$ iii). We assume that there is a basic $2$-form $\omega \in \Lambda^2(M)$ such that $d_hd_JS=\omega$. We apply the inner product $i_G$ to both sides of this equality and use the commutation rule for the two derivations $i_G$ and $d_h$, \cite[p.205]{GM00}, 
\begin{eqnarray*}
i_G\omega & = & i_Gd_hd_JS=-d_hi_Gd_JS+ \mathcal{L}_{hG}d_JS+d_{[h,G]}d_JS \\ & = & -d_hS +\mathcal{L}_Gd_JS-d_vS=\mathcal{L}_Gd_JS - dS \stackrel{\eqref{chi}}{=} 2\chi.
\end{eqnarray*}
The equality of the first and the last terms is the condition iii).
 
We prove the last implication iii) $\Longrightarrow$ i). Formula \eqref{chi} allows to write our assumption as $\delta_GS=i_G\omega$. Using formulae \cite[(2.13)]{BC15}, which provide alternative expressions for the Euler-Lagrange $1$-form, we can rewrite this assumption as $\nabla d_JS - d_hS=i_G\omega$. If we apply the derivation $d_J$ to both sides of the last formula and use again the commutation rule \cite[(2.11)]{BC15}, we obtain:
\begin{eqnarray*}
\nabla d_J^2S + d_hd_JS + 2i_Rd_JS - d_Jd_hS=d_Ji_G\omega \Longleftrightarrow d_hd_JS=\omega.
\end{eqnarray*}  
We apply the differential operator $d$ to both sides of our assumption and then:
\begin{eqnarray*}
\mathcal{L}_Gdd_JS=di_G\omega \Longleftrightarrow \mathcal{L}_Gd_hd_JS + \mathcal{L}_Gd_vd_JS =\mathcal{L}_G\omega - i_Gd\omega \Longleftrightarrow \mathcal{L}_Gd_vd_JS =- i_Gd\omega.
\end{eqnarray*}
We replace the Lie derivative $\mathcal{L}_G$ in terms of the covariant derivative $\nabla$, \cite[(27)]{BD09} and get:
\begin{eqnarray*}
\nabla d_vd_JS - d_{\Phi}d_JS = -i_Gd\omega.
\end{eqnarray*}  
In the above equation, we separate the semi-basic part from the non-semi-basic part. This implies $\nabla d_vd_JS=0$ (the non-semi-basic part), which in view of formula \eqref{es} gives $\nabla E_{ij}=0$ and hence the condition i) is true. We also obtain the equality of the semi-basic parts $d_{\Phi}d_JS= i_Gd\omega$. \end{proof}

The equivalence of i) and iii) has been established by Li and Shen in \cite[Theorem 1.2]{LS15}.

The first proof of Theorem \ref{mainthm} follows immediately from Lemma \ref{lemcfi}. For a Finsler metric of vanishing $\chi$-curvature, formula \eqref{chi} and the condition iii) of Lemma \ref{lemcfi} show that the assumption $\chi=0$ implies the condition i) of Lemma \ref{lemcfi}, $\nabla E_{ij}=0$. Therefore, the tensor \eqref{eij} and all its powers are covariant constant, $\nabla \mathcal{E}^a=0$, $\forall a\in \mathbb{N}$. Since the dynamical covariant derivative commutes with the semi-basic trace, it follows that
\begin{eqnarray*}
G(f_a)=\nabla (f_a) = \nabla \left(\operatorname{Tr} \mathcal{E}^a\right)= \operatorname{Tr} \left(\nabla  \mathcal{E}^a\right)=0, \forall a\in \{1,...,n-1\},  
\end{eqnarray*} 
which means that all the $0$-homogeneous functions $f_a =\operatorname{Tr} \mathcal{E}^a$ are first integrals for the geodesic spray $G$.

\subsection{Second proof of Theorem \ref{mainthm}} 
For a Finsler metric $F$, its reducibility to a Riemannian metric is encoded into its mean Cartan torsion, \cite[Deicke's Theorem]{Deicke53}:
\begin{eqnarray*}
I_k=\frac{1}{2}g^{ij}\frac{\partial g_{ij}}{\partial y^k}=\frac{\partial}{\partial y^k}\left(\ln \sqrt{\det g}\right) = \frac{\partial \tau}{\partial y^k}, \quad I=I_kdx^k = d_J\left(\ln \sqrt{\det g}\right) = d_J\tau.
\end{eqnarray*}
The $0$-homogeneity of the distortion function $\tau$ implies $i_GI=y^iI_i=\frac{\partial \tau}{\partial y^i}y^i=0$.

An important ingredient we will use for the second proof of Theorem \ref{mainthm} is the following $1$-form:
\begin{eqnarray} \label{alpha}
\alpha & = & i_{[J,G]}\mathcal{L}_GI = \nabla I_k dx^k - I_k \delta y^k = \nabla d_J \tau  - d_v\tau \\
& = & d_J\nabla \tau - d_h\tau - d_v\tau = d_J\nabla \tau - d\tau = d_JS - d\tau, \nonumber
\end{eqnarray}
which in the $2$-dimensional case has been introduced in \cite[\S 2]{FR16}. We can easily see that $i_{\mathcal{C}}\alpha = y^iI_i=0$ and $i_G\alpha = y^i\nabla I_i=0$. 

\begin{lem} \label{lemchi}
On a Finsler manifold $(M,F)$, the $\chi$-curvature vanishes if and only if one of the following equivalent conditions is satisfied: 
\begin{itemize}
\item[i)] $\mathcal{L}_G\alpha=0$: the $1$-form $\alpha$ is geodesically invariant (it is a dual symmetry);
\item[ii)] $d_hd_JS=0$: the $S$-function is a Hamel function;
\item[iii)] $d\alpha=2E_{ij}\delta y^i \wedge dx^j$: the horizontal and vertical distributions are Lagrangian distributions for $d\alpha$. 
\end{itemize}
\end{lem}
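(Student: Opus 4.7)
The plan is to derive two clean identities relating $\alpha$ to $\chi$ and to the mean Berwald curvature, and then to invoke Lemma~\ref{lemcfi} once to handle the most delicate implication. First I would compute $\mathcal{L}_G\alpha$. Using the expression $\alpha = d_JS - d\tau$ from \eqref{alpha}, the fact that $d$ commutes with $\mathcal{L}_G$, and $\mathcal{L}_G\tau = G(\tau) = S$ from \eqref{sgt}, one obtains
\begin{eqnarray*}
\mathcal{L}_G\alpha = \mathcal{L}_Gd_JS - dS = \delta_GS = 2\chi,
\end{eqnarray*}
so the equivalence of $\chi=0$ with condition i) follows at once.

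Next I would compute $d\alpha$. Since $d^2\tau=0$ and $d = d_h+d_v$ on functions, \eqref{es} gives
\begin{eqnarray*}
d\alpha = dd_JS = d_hd_JS + d_vd_JS = d_hd_JS + 2E_{ij}\delta y^i\wedge dx^j,
\end{eqnarray*}
which makes the equivalence of ii) and iii) immediate and identifies iii) as the vanishing of the horizontal block of $d\alpha$. For the remaining equivalence between $\chi=0$ and ii), one direction is a short computation: Cartan's formula yields $\mathcal{L}_Gd_JS = d(i_Gd_JS)+i_Gdd_JS$, and since $i_Gd_JS = y^i\partial S/\partial y^i = S$ by the $1$-homogeneity of $S$, while $i_Gd_vd_JS = -2E_{ij}y^j\delta y^i = 0$ (because $G$ is horizontal and $E_{ij}y^j=0$), one finds
\begin{eqnarray*}
2\chi = \mathcal{L}_Gd_JS - dS = i_Gdd_JS = i_Gd_hd_JS.
\end{eqnarray*}
Hence $d_hd_JS=0$ immediately forces $\chi=0$.

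The converse is the main obstacle, because $i_Gd_hd_JS=0$ is strictly weaker than $d_hd_JS=0$. To overcome it I would note that $\chi=0$ satisfies condition iii) of Lemma~\ref{lemcfi} trivially, so by that lemma $d_hd_JS$ is a \emph{basic} $2$-form, say $\frac{1}{2}\omega_{ij}(x)dx^i\wedge dx^j$. Combined with the identity $i_Gd_hd_JS = 2\chi = 0$ just derived, this yields $\omega_{ij}(x)y^i = 0$ for every $(x,y)\in T_0M$; since this expression is linear in $y$, it forces $\omega_{ij}\equiv 0$, hence $d_hd_JS=0$, as required.
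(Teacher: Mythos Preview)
Your argument is correct. The treatment of $\chi=0\Leftrightarrow$ i) and of ii) $\Leftrightarrow$ iii) matches the paper essentially line for line. The difference lies in the equivalence $\chi=0\Leftrightarrow$ ii). Both you and the paper end up with the key identity $2\chi=i_Gd_hd_JS$; you reach it via Cartan's formula and the $1$-homogeneity of $S$, while the paper reaches it via the commutation rule for $i_G$ and $d_h$ from \cite[p.~205]{GM00}. For the direction $\chi=0\Rightarrow d_hd_JS=0$, however, the paper does \emph{not} appeal to Lemma~\ref{lemcfi}: it instead applies $d_J$ to the identity $2\chi=\nabla d_JS-d_hS$ and uses the commutation rule \cite[(2.11)]{BC15} to obtain the clean formula $d_J\chi=d_hd_JS$, from which $\chi=0\Rightarrow d_hd_JS=0$ is immediate. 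Your route through Lemma~\ref{lemcfi} (getting that $d_hd_JS$ is basic, then annihilated by $i_G$, hence zero) is valid and is a nice reuse of already-proved material, but it is logically heavier, since Lemma~\ref{lemcfi} itself rests on the same commutation formulae. The paper's identity $d_J\chi=d_hd_JS$ has the advantage of being self-contained and of giving both implications at once without an external lemma.
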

\begin{proof}
Formulae \eqref{alpha} allow to express the geodesic variation of the $1$-form $\alpha$ in terms of the $\chi$-curvature:
\begin{eqnarray}
\mathcal{L}_G\alpha=\mathcal{L}_Gd_JS-\mathcal{L}_Gd\tau = \mathcal{L}_Gd_JS-d\mathcal{L}_G\tau \stackrel{\eqref{sgt}}{=}\mathcal{L}_Gd_JS-dS \stackrel{\eqref{chi}}{=}2\chi,  \label{lgalpha} 
\end{eqnarray}
and hence $\chi=0$ if and only if $\mathcal{L}_G\alpha=0$.

The expression of the $\chi$-curvature, given by formula \eqref{chi}, can be reformulated using the dynamical covariant derivative, \cite[(2.13)]{BC15}, as follows:
\begin{eqnarray*}
2\chi=\nabla d_JS - d_hS.
\end{eqnarray*}
If we apply to both sides of this formula the derivation $d_J$ and use the commutation rule  \cite[(2.11)]{BC15}, we obtain:
\begin{eqnarray*}
2d_J\chi=d_J\nabla d_JS - d_Jd_hS = \nabla d^2_JS + d_hd_JS + 2i_Rd_JS - d_Jd_hS = 2d_hd_JS.
\end{eqnarray*}
Above formulae give $d_J\chi=d_hd_JS$. We apply to these semi-basic $2$-forms the inner product $i_G$. Therefore, if we use the commutation rule for the derivations $i_G$ and $d_h$, \cite[p.205]{GM00}, we have:
\begin{eqnarray*}
i_Gd_J\chi=i_Gd_hd_JS=-d_hi_GS+\mathcal{L}_{hG}d_JS + d_{[h,G]}S = -d_hS+\mathcal{L}_{G}d_JS - d_vS = \mathcal{L}_{G}d_JS - dS =2\chi.  
\end{eqnarray*}  
The last formulae allow to conclude that $\chi=0$ if and only if $d_J\chi=0$ if and only if $d_hd_JS=0$.

From formulae \eqref{alpha} and \eqref{es} we obtain: 
\begin{eqnarray*}
d\alpha=dd_JS=d_hd_JS+d_vd_JS=d_hd_JS+2E_{ij}\delta y^i\wedge dx^j.
\end{eqnarray*}
Therefore, $d\alpha=2E_{ij}\delta y^i\wedge dx^j$ if and only if $d_hd_JS=0$ if and only if $\chi=0$.
\end{proof}
For a Finsler surface, the $\chi$-curvature vanishes if and only if we have a $k$-basic Finsler structure, \cite[Theorem 1.1]{LS18}. Therefore, Lemma \ref{lemchi} allows to recover the geodesic invariance of the $1$-form $\alpha$ for $k$-basic Finsler surfaces obtained in \cite[Lemma 2.2]{FR16}. 

For the second proof of Theorem \ref{mainthm} we use the characterizations established in Lemma \ref{lemchi} for Finsler metrics of vanishing $\chi$-curvature. 

The $1$-form $\alpha$ is $0$-homogeneous and satisfies $i_{\mathcal{C}}\alpha=0$, which means that it can be viewed as a form on $SM$, $\alpha\in \Lambda^1(SM)$. Using similar ideas from \cite[\S 3]{Bucataru22}, we will use this form and the Hilbert $1$ and $2$-forms $d_JF\in \Lambda^1(SM)$ and $dd_JF\in \Lambda^2(SM)$ to construct, for each $a\in \{1,...,n\}$, an invariant $(2n-1)$-form on $SM$:
\begin{eqnarray}
\Omega_a=\frac{(-1)^{(n-1)(n-2)/2}}{(a-1)! (n-a)!}d_JF\wedge
  \left(dd_J F\right)^{(a-1)} \wedge
  \left(d\alpha\right)^{(n-a)} \in \Lambda^{2n-1}(SM). \label{oa} 
\end{eqnarray}
We can seen that $\Omega_n=\Omega_F$ is the canonical volume \eqref{on} of the contact manifold $(SM, d_JF)$. 

The Reeb vector field of the contact manifold $(SM, d_JF)$ is the normalized geodesic field $G/F$. Using \cite[Lemma 4.1]{Bucataru22} it follows that the Hilbert $1$ and $2$-forms $d_JF$ and $dd_JF$ are preserved by the action of the Reeb vector field $G/F$:
\begin{eqnarray*}
\mathcal{L}_{G/F}d_JF =\frac{1}{F}\left(\mathcal{L}_{G}d_JF - dF\right)  = 0 \ \textrm{and} \ \mathcal{L}_{G/F}dd_JF=0.
\end{eqnarray*}
Since $i_G\alpha=y^i\nabla I_i = 0$, we also have:
\begin{eqnarray*}
\mathcal{L}_{G/F}\alpha =\frac{1}{F}\mathcal{L}_{G}\alpha + i_G\alpha \ d\left(\frac{1}{F}\right) = \frac{1}{F}\mathcal{L}_{G}\alpha \stackrel{\eqref{lgalpha}}{=} 0,
\end{eqnarray*}
which means that the form $\alpha$ is also invariant by the action of the Reeb vector field. Consequently, all the $(2n-1)$-forms \eqref{oa} are invariant as well: 
\begin{eqnarray*}
\mathcal{L}_{G/F}\Omega_a=0, \quad \forall a\in \{1,...,n\}.
\end{eqnarray*}
It follows that the proportionality factors of these $(2n-1)$-forms:
\begin{eqnarray}
\Omega_a=g_{n-a}\Omega_n, \quad \forall a\in \{1,...,n-1\}, \label{ga}
\end{eqnarray}
are also preserved by the action of the Reeb vector field. Therefore, the functions $g_a\in C^{\infty}(SM)$, $a\in \{1,...,n-1\}$, are first integrals for the Reeb vector field and hence for the geodesic spray, $G(g_a)=0$.

We prove now that the first integrals $g_a$, given by formula \eqref{ga}, are the coefficients of the characteristic polynomial \eqref{pl}.

Together with the $(2n-1)$ forms \eqref{oa} on $SM$, we consider the following $2n$-forms on $T_0M$:
\begin{eqnarray}
\Omega'_a=(-1)^{n-1} F^{n-1} dF\wedge \Omega_a = (-1)^{n-1} F^{n-1} d_vF\wedge \Omega_a , \quad a\in \{1,...,n\}. \label{o'a}
\end{eqnarray} 
It is easy to see that $\Omega'_n=\Omega'_F$ is the volume form \eqref{o'n} and the functions $g_a$ are uniquelly determined also by $\Omega'_a=g_a\Omega'_n$, $\forall a\in \{1,...,n-1\}$. First, we can write the characteristic polynomial \eqref{pl} as follows:
\begin{eqnarray*}
P(\Lambda)=\det\left(g^{ik}2FE_{kj}+ \Lambda \delta^i_j\right) = \frac{1}{{\det g}}\det\left(2FE_{kj}+ \Lambda g_{kj}\right). 
\end{eqnarray*}
Therefore, we have:
\begin{eqnarray*}
P(\Lambda)\Omega'_n & \stackrel{\eqref{o'n}}{=} & \frac{1}{{\det g}}\det\left(2FE_{kj}+ \Lambda g_{kj}\right) \det g \ dx\wedge dy \\
& = & \frac{(-1)^{n(n+1)/2}}{n!}\left(\left(2FE_{kj}+\Lambda g_{kj}\right)\delta y^k \wedge dx^j \right)^{(n)} \\
&\stackrel{\eqref{ddjf2}}{ =} & \frac{(-1)^{n(n+1)/2}}{n!}\left(2FE_{kj} \delta y^k \wedge dx^j +\Lambda \frac{1}{2}dd_JF^2\right)^{(n)} \\
&\stackrel{(a)}{ = } & \frac{(-1)^{n(n+1)/2}}{n!}\left(Fd\alpha +\Lambda \left(Fdd_JF + dF\wedge d_JF\right)\right)^{(n)} \\
&\stackrel{(b)}{ = } & \frac{(-1)^{n(n+1)/2}}{n!}\left( \sum_{a=0}^n \frac{n!}{a!(n-a)!} \Lambda^a \left(Fdd_JF + dF\wedge d_JF\right)^{(a)} \wedge (Fd\alpha)^{(n-a)}\right) \\ 
&\stackrel{(c)}{ = } & (-1)^{n(n+1)/2} \left( \sum_{a=1}^n \frac{\Lambda^a}{(a-1)!(n-a)!} dF\wedge d_JF \wedge \left(Fdd_JF\right)^{(a-1)} \wedge \left(Fd\alpha\right)^{(n-a)}\right) \\
& = & (-1)^{n-1} F^{n-1} dF \wedge \left( \sum_{a=1}^n \Lambda^a\frac{(-1)^{(n-1)(n-2)/2}}{(a-1)!(n-a)!} d_JF \wedge \left(dd_JF\right)^{(a-1)} \wedge \left(d\alpha\right)^{(n-a)} \right) \\
& \stackrel{\eqref{oa}}= & (-1)^{n-1} F^{n-1} dF \wedge \left( \sum_{a=1}^n \Lambda^a \Omega_a \right)
 \stackrel{\eqref{ga}}{=} (-1)^{n-1} F^{n-1} dF \wedge \left(\Lambda^n+ \sum_{a=1}^{n-1} \Lambda^a g_{n-a}\right) \Omega_n  \\ & \stackrel{\eqref{o'a}}{=}&  \left(\Lambda^n+ \sum_{a=1}^{n-1} g_{n-a}\Lambda^a \right) \Omega'_n.
\end{eqnarray*}
It follows that the proportionality functions \eqref{ga} are the coefficients of the characteristic polynomial \eqref{pl}. 

In the above calculations, we used the following arguments:

(a) For vanishing $\chi$-curvature, the expression $d\alpha = 2E_{kj} \delta y^k \wedge dx^j$ is given by iii) of Lemma \ref{lemchi}. We also note here that $\operatorname{rank}(d\alpha)\leq 2\operatorname{rank}(E_{kj}) \leq 2(n-1)$. 

(b) Since $2$-forms commute under the wedge product, we can use a binomial Newton-type formula for the wedge power.

(c) We use again a binomial Newton-type formula for the wedge power to expand 
\begin{eqnarray*}
\left(Fdd_JF + dF\wedge d_JF\right)^{(a)}=\left(Fdd_JF\right)^{(a)} + a \  dF\wedge d_JF \wedge \left(Fdd_JF\right)^{(a-1)}. 
\end{eqnarray*}
However, the following $2n$-form vanishes, $\left(Fdd_JF\right)^{(a)}\wedge \left(Fd\alpha\right)^{(n-a)} =0$, since $\operatorname{rank}(dd_JF)=2(n-1)$ and  $\operatorname{rank}(d\alpha)\leq 2(n-1)$. 

The two sets of first integrals $\{f_1,...,f_{n-1}\}$ and $\{g_1,...,g_{n-1}\}$ from Theorem \ref{mainthm} can be related using Newton's identities, \cite{K95}: 
\begin{eqnarray}
g_{a}=\frac{1}{a!}
\begin{vmatrix}
f_{1} & 1 & 0 & \cdots & \cdots & \cdots & 0\\
f_{2} & f_{1} & 2 & 0 & \cdots & \cdots & 0\\
f_{3} & f_{2} & f_{1} & 3 & 0 & \cdots & 0\\
\cdots & \cdots & \cdots & \cdots & \cdots & \cdots & \cdots\\
f_{a-1} & f_{a-2} & f_{a-3} & \cdots & \cdots & f_{1} & a-1\\
f_{a} & f_{a-1} & f_{a-2} & \cdots & \cdots & f_{2} & f_{1}
\end{vmatrix}, \quad \forall a\in \{1,...,n-1\}.  \label{newton}
\end{eqnarray}
It is easy to see that $f_1=g_1=\operatorname{Tr}\left(\mathcal{E}\right)$, and general linear recurrence formulae between the sets of first integrals $\{f_1,...,f_{a}\}$ and $\{g_1,...,g_{a}\}$ can be established, for any $a \in \{1,..,n-1\}$, \cite{K95}.

\section{Particular cases, alternative expressions for the first integrals and examples}

\subsection{Particular cases.} Since for Finsler metrics of constant curvature, the  $\chi$-curvature vanishes, \cite[Theorem 1.1]{LS18}, we obtain the following corollary.  

\begin{cor} \label{const}
Consider $F$ a Finsler metric of constant curvature. Then the functions \eqref{fa} and the coefficients of the characteristic polynomial \eqref{pl} are constant along the geodesics of the Finsler metric $F$. 
\end{cor}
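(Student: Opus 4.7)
The plan is to deduce the corollary as an immediate specialization of Theorem \ref{mainthm}, using only one external input: the result of Li and Shen cited in the paper as \cite[Theorem 1.1]{LS18}, which asserts that every Finsler metric of constant flag curvature has vanishing $\chi$-curvature. Thus the strategy is a two-line chain of implications, with no genuine computation.

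First, I would state the hypothesis precisely: $F$ is a Finsler metric of constant flag curvature, meaning that its Jacobi endomorphism has the form $\Phi = \kappa(F^2 J - Fd_JF\otimes \mathcal{C})$ with $\kappa\in\mathbb{R}$ constant. Next, I would invoke the cited theorem of Li and Shen to conclude $\chi = 0$ identically on $T_0M$. Finally, I would apply Theorem \ref{mainthm} to the metric $F$: the hypothesis of that theorem (vanishing $\chi$-curvature) is satisfied, so its conclusion holds, namely that the $0$-homogeneous functions $f_a = \operatorname{Tr}(\mathcal{E}^a)$ and the coefficients $g_a$ of the characteristic polynomial \eqref{pl} satisfy $G(f_a) = 0$ and $G(g_a) = 0$ for every $a\in\{1,\dots,n-1\}$. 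This is exactly the statement that they are constant along the geodesics of $F$.

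There is essentially no obstacle: the corollary is a genuine corollary, packaging the two ingredients (Theorem \ref{mainthm} of the present paper and \cite[Theorem 1.1]{LS18}) into a clean statement aimed at the reader interested in constant-curvature Finsler metrics rather than the broader class with $\chi = 0$. If I wanted to say a word more, I would remark that the $n-1$ functions $f_a$ (equivalently $g_a$) so produced are non-trivial precisely when the metric is non-Riemannian, since for a Riemannian metric the mean Berwald curvature vanishes and the tensor $\mathcal{E}^i_j$ in \eqref{eij} is identically zero, making all $f_a$ and $g_a$ trivially zero.
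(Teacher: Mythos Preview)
Your proposal is correct and matches the paper's approach exactly: the paper simply notes that for Finsler metrics of constant curvature the $\chi$-curvature vanishes by \cite[Theorem 1.1]{LS18}, so Theorem \ref{mainthm} applies directly. Your additional remark about non-triviality in the non-Riemannian case is a reasonable comment but is not part of the paper's justification.
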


We will show now that the first integral $g_{n-1}$ coincides with the first integral \cite[(1.1)]{BCC21}. Indeed, using formulae \eqref{oa} and \eqref{o'a}, for $a=1$, we have:
\begin{eqnarray*}
\Omega'_1 & = & \frac{(-1)^{n(n+1)/2}}{(n-1)!} dF\wedge d_JF \wedge \left(Fd\alpha\right)^{(n-1)} \\
& = &  \frac{(-1)^{n(n+1)/2}}{n!} \left( d_vF\wedge d_JF + Fd\alpha\right)^{(n)} \\
& = & \frac{(-1)^{n(n+1)/2}}{n!} \left(  \left(2FE_{ij}+\frac{\partial F}{\partial y^i} \frac{\partial F}{\partial y^j}\right) \delta y^i \wedge dx^j \right)^{(n)} \\
& = & \det\left(2FE_{ij}+\frac{\partial F}{\partial y^i} \frac{\partial F}{\partial y^j}\right) dx\wedge dy =\frac{1}{\det g} \det\left(2FE_{ij}+\frac{\partial F}{\partial y^i} \frac{\partial F}{\partial y^j}\right) \Omega'_n.   
\end{eqnarray*} 
These arguments and formula \eqref{oa} for $a=1$ give the first integral:
\begin{eqnarray*}
g_{n-1}=\frac{1}{\det g} \det\left(2FE_{ij}+\frac{\partial F}{\partial y^i} \frac{\partial F}{\partial y^j}\right) = \frac{-1}{\det g}  \begin{vmatrix}
2FE_{ij} & \displaystyle\frac{\partial
  F}{\partial y^i} \vspace{2mm} \\
\displaystyle\frac{\partial F}{\partial y^j} & 0 
\end{vmatrix},
\end{eqnarray*}
which is the first integral \cite[(1.1)]{BCC21}.

\subsection{Alternative expressions for the first integrals.}
The mean Berwald curvature can be expressed in terms of the mean Cartan torsion and the mean Landsberg curvature, \cite[Lemma 6.2.4]{Shen01}:
\begin{eqnarray}
E_{ij}=\frac{1}{2}\left\{I_{j;i}+J_{i\cdot j}\right\}, \label{eCL}
\end{eqnarray}
where: $J_i=\nabla I_i$ is the mean Landsberg curvature, $J_{i\cdot j}=\frac{\partial J_i}{\partial y^j}$ is the vertical covariant derivative of the mean Landsberg curvature and 
\begin{eqnarray*}
I_{j;i}=\frac{\delta I_j}{\delta x^i} - I_l\frac{\partial^2 G^l}{\partial y^i \partial y^j}
\end{eqnarray*} 
is the horizontal covariant derivative of the mean Cartan torsion with respect to the Berwald connection.

It follows that the function 
\begin{eqnarray}
g_1=f_1=\frac{1}{2}g^{ij}\left\{I_{j;i}+J_{i\cdot j}\right\} \label{f1}
\end{eqnarray}
is a first integral for a Finsler metric of vanishing $\chi$-curvature.
For a $k$-basic Finsler surface, the first integral \eqref{f1} reduces to the first integral obtained in \cite[Theorem B]{FR16} as a function of the Cartan scalar $I$ and the Landsberg scalar $J$.

\subsection{Example.}

In this section we present an example of a Finsler metrics of constant curvature and the corresponding first integrals obtained using Theorem \ref{mainthm}. Due to the complexity of the calculations, these were obtained in dimension $3$ using a software package.

Consider $B^3$ the open unit ball in $\mathbb{R}^3$ and $F: B^3\times \mathbb{R^*}^3 \to \mathbb{R}$ the Berwald metric:
\begin{eqnarray*}
F(x,y)=\frac{\left(\sqrt{|y|^2-(|x|^2|y|^2-\langle x, y\rangle^2)}+\langle x, y \rangle\right)^2}{\left(1-|x|^2\right)^2\sqrt{|y|^2-(|x|^2|y|^2-\langle x, y\rangle^2)}}.
\end{eqnarray*}
The Finsler metric $F$ is $R$-flat and projectively flat, with the projective factor:
\begin{eqnarray*}
P(x,y)=\frac{\sqrt{|y|^2-(|x|^2|y|^2-\langle x, y\rangle^2)}}{1-|x|^2}+\frac{\langle x, y \rangle}{1-|x|^2},\ y\in T_xB^3,
\end{eqnarray*}
being the Funk metric, \cite[(2.48)]{Shen01}. The spray coefficients \eqref{gi} are given by $G^i(x,y)=P(x,y)y^i$ and hence the mean Berwald curvature \eqref{be} is give by:
\begin{eqnarray*}
E_{ij} = \frac{1}{2} \frac{\partial^3 G^l}{\partial y^i \partial y^j \partial y^l} = \frac{n+1}{2} \frac{\partial^2 P}{\partial y^i \partial y^j}.
\end{eqnarray*}
Therefore, the tensor \eqref{eij} is given by
\begin{eqnarray*}
\mathcal{E}^i_j=(n+1)Fg^{ik}\frac{\partial^2P}{\partial y^k \partial y^j},
\end{eqnarray*}
and its characteristic polynomial \eqref{pl} gives the following $2$ first integrals:
\begin{equation*}
\begin{aligned}
g_1&=\dfrac{\left(-2\langle x,y\rangle\sqrt{|y|^2-|x|^2|y|^2+\langle x,y\rangle^2}-2\langle x,y\rangle^2-|y|^2(1-|x|^2)\right)\cdot\left(|y|^2-|x|^2|y|^2+\langle x,y\rangle^2\right)^2)}{8\left(\frac{1}{2}|y|^2+|x|^2|y|^2-2\langle x,y\rangle^2\right)|y|^2\left(\langle x,y\rangle+\sqrt{|y|^2-|x|^2|y|^2+\langle x,y\rangle^2}\right)^2},
\end{aligned}
\end{equation*}
\begin{equation*}
\begin{aligned}
g_2&=1+\dfrac{\left(2|y|^2+|x|^2|y|^2-\langle x,y\rangle^2\right)\cdot\left(|x|^2|y|^2-\langle x,y\rangle^2\right)}{2|y|^2\left(-\frac{1}{2}|y|^2+|y|^2(1+|x|^2)-\langle x,y\rangle^2\right)}.
\end{aligned}
\end{equation*}
To compute the Poisson bracket of these two first integrals, we use the software package, to obtain:
\begin{eqnarray*}
\nonumber \left\{g_1, g_2\right\}  = \frac{1}{2} g^{ij} \dfrac{\partial g_1}{\partial y^j} \dfrac{\delta g_2}{\delta x^i}  - \frac{1}{2} g^{ij}\dfrac{\partial g_2}{\partial y^j} \dfrac{\delta g_1}{\delta x^i}=0. 
\end{eqnarray*} 
Therefore, the two first integrals $g_1$ and $g_2$ Poisson commute with respect to the flat metric and also with respect to the Berwald metric.

%\subsection*{Acknowledgements} 

\end{document}